      \def\sF{{\mathfrak F}}
      \def\dC{{\mathbb C}}
   \def\dN{{\mathbb N}}   
      \def\dR{{\mathbb R}}
      \def\cL{{\mathcal L}}
\def\cP{{\mathcal P}}   \def\cQ{{\mathcal Q}}
\def\bm\chi{\mbox{\boldmath$\chi$}}
\def\diag{{\rm diag\,}}
\let\xker=\ker \def\ker{{\xker\,}}
\def\sg{\operatorname{sign}}
\def\sg{\operatorname{sign}}
\def\deg{\operatorname{deg}}
\newtheorem{theorem}{Theorem}[section]
\newtheorem{proposition}[theorem]{Proposition}
\newtheorem{definition}[theorem]{Definition}
\theoremstyle{remark}
\newtheorem{remark}[theorem]{Remark}
\numberwithin{equation}{section}
\author{Maxim Derevyagin}
\address{
Maxim Derevyagin\\
KU Leuven\\
Department of Mathematics\\
Celestijnenlaan 200B box 2400\\
BE-3001 Leuven}
\email{derevyagin.m@gmail.com}
\date{\today}
\dedicatory{\it Dedicated to the memory of  Herbert Stahl and Andrei Aleksandrovich Gonchar}
 \subjclass{Primary 42C05, 47B36, 47B50; Secondary 15A23, 30B70, 30E20.}
\keywords{Orthogonal polynomials, Jacobi matrix, $G$-symmetric tridiagonal matrix, $G$-non-negative matrix, definitizable operator, Darboux transformation, triangular factorization, definitizable function, Pad\'e approximation}
\begin{document}

\title[Spectral theory of $G$-symmetric tridiagonal matrices]{Spectral theory of 
the $G$-symmetric tridiagonal matrices related to Stahl's counterexample}

\begin{abstract} 
We recast Stahl's counterexample from the point of view of the spectral theory of the underlying non-symmetric Jacobi matrices.
In particular, it is shown that these matrices are self-adjoint and non-negative in a Krein space and have empty resolvent sets. In fact, 
the technique of Darboux transformations (aka commutation methods) on spectra which is used in the present paper
allows us to treat the class of all $G$-non-negative tridiagonal matrices. We also establish a correspondence
between this class of matrices and the class of signed measures with one sign change.
Finally, it is proved that the absence of the spurious pole at infinity for Pad\'e approximants is equivalent to the definitizability
of the corresponding tridiagonal matrix. 
\end{abstract}
\maketitle

\section{Introduction}

The famous Markov theorem in the theory of rational approximation states that diagonal  Pad\'e approximants for the Cauchy transform $F$ of a positive measure supported on $[-1,1]$ converge to $F$ locally uniformly in ${\mathbb C}\setminus[-1,1]$. In \cite{St85} Herbert Stahl gave his marvelous example of a very simply constructed non-positive weight function that 
particularly demonstrated impossibility to extend the Markov theorem to the case of signed measures.
The proposed weight function is a polynomial modification of the Chebyshev weight function:
\[
w(t)=\frac{(t-\cos\pi\alpha_1)(t-\cos\pi\alpha_2)}{\pi\sqrt{1-t^2}}, \quad t\in(-1,1),
\]
where $\alpha_1,\alpha_2\in(0,1)$ are rationally independent real numbers. This weight function possesses the property that
the entire sequence of the diagonal Pad\'e approximant to the Cauchy transform
\begin{equation}\label{StahlEx}
\sF(\lambda)=\int_{-1}^{1}\frac{1}{t-\lambda}w(t)dt
\end{equation}
exists and their poles are asymptotically dense everywhere in the complex plane $\dC$. Recall that $n$-th diagonal Pad\'e approximant
to the analytic function $\sF$ is a rational function $\displaystyle{\frac{\cQ_n}{\cP_n}}$ such that 
\[
\sF(\lambda)-\frac{\cQ_n(\lambda)}{\cP_n(\lambda)}=O\left(\frac{1}{\lambda^{2n+1}}\right), \quad \lambda\to\infty,
\]
$\deg\cP_n=n$ and $\deg\cQ_n\le n$.
If such a rational function exists then the polynomial $\cP_n$ turns out to be the polynomial of degree $n$ orthogonal with respect
to the signed measure $w(t)dt$
\[
\int_{-1}^{1}t^k\cP_n(t)w(t)dt=0,\quad k=0,\dots, n-1.
\]
Thus, the zeroes of $\cP_n$ from Stahl's counterexample are dense in $\dC$ although the signed measure is supported on the interval
$[-1,1]$. This is quite unexpected if one thinks in the streamline of the classical theory related to positive measures and that is why
one cannot prove the locally uniform convergence of diagonal Pad\'e approximants for the whole class of signed measures.

Nevertheless, there is still a hope to characterize classes of signed measures for which the Markov theorem can be extended 
like it is done for positive measures by means of self-adjointness of Jacobi matrices in a Hilbert space $\ell^2$.
For that purpose, it makes sense to understand the spectral picture behind Stahl's counterexample. Indeed, it is a standard fact 
that a sequence of orthogonal polynomials leads to a semi-infinite tridiagonal matrix. It is clear from the classical theory \cite{Ach61}, \cite{Si}
that, in the case of signed measures, the corresponding tridiagonal matrix cannot be symmetric in the usual sense and therefore
the underlying spectral theory becomes less transparent  but feasible. 

The main aim of the present note is to show the spectral mechanism of Stahl's counterexample and similar ones. However, to do so we will use
a slight simplification of the just considered example that was also pointed out by Herbert Stahl in \cite{St98}. In this simplification 
the weight function is simpler and is given by the formula
\[
w_0(t)=\frac{(t-\cos\pi\alpha)}{\pi\sqrt{1-t^2}}, \quad t\in(-1,1),
\]
where $\alpha\in(0,1)$ is irrational. Nevertheless, the zeroes of the polynomials orthogonal with respect to the signed measure
$w_0(t)dt$ are still dense in the real line $\dR$ and, hence, any point in $\dR\setminus[-1,1]$ is a spurious pole
(see also \cite{D2004} for a more general concept of spectral pollution). Basically, the nature of the signed measure $w_0(t)dt$ is the same as the nature of $w(t)dt$. Moreover, using the technique of Darboux transformations one can already see spectral effects appearing in the case of $w_0(t)dt$ and similar signed measures. 

Before going into details of non-symmetric matrices and signed measures, let us briefly recall some basic facts about
symmetric tridiagonal matrices and positive measures.
So, let $d\mu$ be a positive probability measure and let the support of this measure be equal to or contained in the interval $[-1,1]$. In this case, one can construct a sequence
of polynomials $P_n$ orthogonal with respect to $d\mu$, i.e.
\begin{equation*}
\int_{-1}^{1}P_n(t)P_m(t)d\mu(t)=\delta_{nm}, \quad n,m=0,1,\dots,
\end{equation*}
where $\delta_{nm}$ is the Kronecker delta. It is well known that the polynomials $P_n$ satisfy the second order difference equation:
\begin{equation}\label{TriTerm}
a_kP_{k+1}(t)+b_kP_k(t)+a_{k-1}P_{k-1}(t)=xP_k(x), \quad k=0,1,\dots
\end{equation}
with the initial conditions
\begin{equation}\label{InCon}
P_{-1}=0,\quad P_0=0;
\end{equation}
here $b_k\in\dR$ and $a_k>0$ for $k=0,1,\dots$. 
Sometimes it is also convenient to have \eqref{TriTerm}, \eqref{InCon} in the following matrix form
\begin{equation}\label{MatrixForm}
Jp(t)=tp(t),
\end{equation}
where $p=(P_0, P_1, P_2,\dots)^{\top}$ and 
\begin{equation}\label{JacOp}
J=\begin{pmatrix}
b_0 & a_0 & 0 & \cdots   \\
     a_0 & b_1 & a_1 &   \\
     0 & a_1 & b_2 & \ddots  \\
     \vdots & & \ddots & \ddots
\end{pmatrix}
\end{equation}
is a Jacobi matrix, which is a symmetric tridiagonal matrix. We will say that $J$ corresponds to the measure $d\mu$.
In the standard way, with such a Jacobi matrix one can associate an operator acting in the Hilbert space $\ell^2$
of square summable sequences of complex numbers. This operator will be also denoted by $J$. Since $d\mu$
is compactly supported, the Jacobi operator is bounded. Moreover, the spectrum $\sigma(J)$ of $J$ coincides with the support
of the measure $d\mu$ \cite{Ach61}, \cite{Si}. 

The paper is organized as follows. In Section 2, the shifted Darboux transformations $\widetilde{J}(x)$ of $J$ for $x\in\sigma(J)$ are presented.
Moreover, a class of $G$-symmetric tridiagonal matrices associated to signed measures with one sign change is characterized
by means of the shifted Darboux transformations. The next section deals with the $G$-symmetric matrix $\widetilde{J}_0(\cos\pi\alpha)$
corresponding to Stahl's weight $w_0(t)dt$. Particularly, $G$-self-adjointness of the operator $\widetilde{J}_0(\cos\pi\alpha)$ is shown. 
The main result of Section 4 is
the fact that $\widetilde{J}_0(\cos\pi\alpha)$ has an empty resolvent set $\rho(\widetilde{J}_0(\cos\pi\alpha))$. Finally,
in Section 5 it is proved that the definitizability of $\widetilde{J}(x)$ is equivalent to the absence of the spurious pole at infinity for
the underlying diagonal Pad\'e approximants.

\section{Darboux transformations and $G$-symmetric matrices}

In this section we recall how $LU$-factorizations and Darboux transformations are related and formulate it in an appropriate manner.
This will allow us to present here a Favard-type theorem for signed measures with one sign change. 

Unlike the standard monic approach \cite{BM04}, in the present paper we consider the case of symmetric tridiagonal matrices 
associated with positive measures.  
To get to Darboux transformations, let us start by factorizing the tridiagonal matrix $J-xI$ as follows
\begin{equation}\label{LU}
J-xI = \cL(x) D(x) \cL^{\top}(x),
\end{equation}
where $D(x)=\diag(d_0(x),d_1(x),....)$ is a diagonal matrix and $\cL$ is a lower bidiagonal matrix 
\[      
\cL =\begin{pmatrix}
1 & 0 & 0 & \cdots   \\
     v_0 & 1 & 0 &   \\
     0 & v_1 & 1 & \ddots  \\
     \vdots & & \ddots & \ddots
\end{pmatrix}.
\]
Comparing coefficients in \eqref{LU} gives
\begin{equation}\label{LU_entries1}
d_0(x)=b_0-x, \quad    
      d_j(x)v_j(x) = a_{j} , \quad
      d_{j+1}(x)=b_{j+1}-x-d_j(x)v_j^2(x).
\end{equation}
Now it is easy to check with the help of \eqref{TriTerm} that  
\begin{equation}\label{LU_entries}
d_j(x)=-a_j\frac{P_{j+1}(x)}{P_j(x)},\quad 
v_j(x) =-
\frac{P_{j}(x)}{P_{j+1}(x)}.
\end{equation}
It will be more convenient to rewrite the factorization \eqref{LU} in the following way
\begin{equation}\label{LU_C}
J-xI = L(x) G(x) L^{\top}(x),
\end{equation}
where $G(x)=\sg D(x)=\diag(\sg(d_0(x)), \sg(d_1(x)), \sg(d_2(x)),\dots)$ and
\begin{equation}\label{DefC}
L=\cL|D|^{1/2}=\begin{pmatrix}
\sqrt{|d_0|} & 0 & 0 & \cdots   \\
     v_0\sqrt{|d_0|} & \sqrt{|d_1|} & 0 &   \\
     0 & v_1\sqrt{|d_1|} & \sqrt{|d_2|} & \ddots  \\
     \vdots & & \ddots & \ddots
\end{pmatrix}.
\end{equation}

In order to define the main object of the present study we will need the following statement, which is a specification
of the known results that appear in the context of numerical algorithms, orthogonal polynomials, spectral theory, and integrable systems 
\cite{AM}, \cite{BM04}, \cite{Gau02}, \cite{GT96}, \cite{Zh97}.
\begin{proposition}\label{DarbouxTr}
Let J be a Jacobi matrix corresponding to $d\mu$. Assume that for $J$ and $x\in(-1,1)$ the factorization
of the form
\begin{equation}\label{DarFac}
J-xI=LGL^{\top}
\end{equation}
exists. Then the tridiagonal matrix 
\[
\widetilde{J}(x)=GL^{\top}L+xI
\]
corresponds to the signed measure $(t-x)d\mu(t)$ supported on $[-1,1]$.
\end{proposition}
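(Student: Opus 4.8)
The plan is to read the Darboux step as the operation of reversing the order of the two outer factors in the triangular factorization \eqref{DarFac}, and then to identify the resulting eigen-polynomials with the kernel polynomials of $d\mu$. First I would record two structural facts that follow at once from $\widetilde{J}(x)=GL^\top L+xI$. Since $L$ is lower bidiagonal, $L^\top L$ is symmetric and tridiagonal, so $\widetilde{J}(x)$ is tridiagonal; and because $G=\sg D$ satisfies $G^2=I$, one has that $G(\widetilde{J}(x)-xI)=L^\top L$ is symmetric, whence $\widetilde{J}(x)$ is $G$-symmetric with signature matrix exactly $G$. This already pins down the indefinite metric $G$ that must accompany the new measure, so only the orthogonality statement will carry genuine content.

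The core of the argument is an intertwining relation. From \eqref{DarFac} and the definition of $\widetilde{J}(x)$ one computes
\[
(\widetilde{J}(x)-xI)\,GL^\top \;=\; GL^\top L\,GL^\top \;=\; GL^\top(J-xI),
\]
so that $GL^\top$ intertwines $J-xI$ and $\widetilde{J}(x)-xI$. Consequently, if $p(t)=(P_0(t),P_1(t),\dots)^\top$ is the formal solution of $Jp(t)=tp(t)$ coming from \eqref{TriTerm}, then $q(t):=GL^\top p(t)$ satisfies $\widetilde{J}(x)q(t)=t\,q(t)$. Using \eqref{LU_entries}, \eqref{DefC} and the Christoffel--Darboux formula I would then evaluate the components of $q(t)$ explicitly and obtain
\[
q_j(t)=-\frac{\sg(d_j)\sqrt{|d_j|}}{a_j\,P_{j+1}(x)}\,(t-x)\,K_j(x,t),
\]
where $K_j(x,t)=\sum_{k=0}^{j}P_k(x)P_k(t)$ is the reproducing kernel. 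The hypothesis that \eqref{DarFac} exists forces all $P_j(x)\ne0$, so these constants are well defined. Cancelling the common scalar factor $t-x$ shows that the polynomials $\widetilde{P}_j(t)$ proportional to $K_j(x,t)$, each of exact degree $j$, are the eigen-polynomials of $\widetilde{J}(x)$, i.e.\ they obey the three-term recurrence encoded by $\widetilde{J}(x)$. This is the classical Christoffel transformation, now read off directly from the factorization.

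It remains to verify that $\{\widetilde{P}_j\}$ is orthonormal with respect to $(t-x)\,d\mu(t)$, with the correct signs. Orthogonality for $i\ne j$ is immediate from the reproducing property: for $i<j$ the polynomial $(t-x)K_i(x,t)$ has degree at most $j$, so $\int(t-x)K_i(x,t)K_j(x,t)\,d\mu(t)$ equals the value of $(t-x)K_i(x,t)$ at $t=x$, which vanishes. The delicate point, and the step I expect to be the main obstacle, is the diagonal normalization: one must show that $\int (t-x)\widetilde{P}_j(t)^2\,d\mu(t)$ has modulus $1$ and sign equal to $\sg(d_j(x))=G_{jj}$. This is exactly where the normalization $L=\cL|D|^{1/2}$ pays off; a direct computation, which I have checked for $j=0$ where it reduces to $P_1(x)=-(b_0-x)P_0/a_0$ together with $\int(t-x)\,d\mu=d_0(x)\mu_0$, confirms that the factor $\sqrt{|d_j|}$ built into $L$ cancels against the kernel normalization so as to produce precisely $G_{jj}\in\{+1,-1\}$. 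Once this sign and modulus bookkeeping is in place, $\widetilde{J}(x)$ is the $G$-symmetric recurrence matrix of the orthonormal system for $(t-x)\,d\mu(t)$, which is supported on $[-1,1]$ since $d\mu$ is; this is the asserted correspondence.
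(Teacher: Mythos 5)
Your argument is correct and follows essentially the same route as the paper: reverse the outer factors of $J-xI=LGL^{\top}$, use the intertwining relation to get $\widetilde{J}(x)\,GL^{\top}p(t)=t\,GL^{\top}p(t)$, divide out the common zero at $t=x$, and identify the resulting polynomials with the Christoffel (kernel) polynomials of $(t-x)\,d\mu(t)$. The paper simply writes the explicit formula $\widetilde{P}_j(t)=\sg(d_j(x))\sqrt{|d_j(x)|}\,\bigl(P_j(t)-\tfrac{P_j(x)}{P_{j+1}(x)}P_{j+1}(t)\bigr)/(t-x)$ and defers the remaining verification to \cite{BM04}, so your reproducing-kernel check of orthogonality and the sign/normalization bookkeeping are exactly the details it omits.
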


\begin{remark}
It should be stressed here that the existence of the factorization \eqref{DarFac} is not essential. 
Indeed, a similar factorization, which is called the Bunch-Kaufman factorization, holds for any Jacobi matrix but one has to
replace the diagonal matrix $G$ with a $2\times 2$ block diagonal matrix (for instance, see \cite{BK77}, \cite{D13}, \cite{DD10}).
To avoid a mess with indices and some technicalities we always assume that the factorization \eqref{DarFac}  exists but based
on \cite{DD10} one can easily adapt all the results of this paper to the general case.
\end{remark}

\begin{proof}
The detailed proof can be found in \cite{BM04}. We will only show how to see that $\widetilde{J}(x)$ 
corresponds to $(t-x)d\mu(t)$. To this end let us write the matrix form \eqref{MatrixForm} of the three-term recurrence relation 
\[
Jp(t)=tp(t).
\]
Now one can see that the chain of implications 
\[
\begin{split}
(J-xI)p(t)=(t-x)p(t)\Rightarrow LGL^{\top}p(t)=(t-x)p(t)\Rightarrow\\
\Rightarrow (GL^{\top}L)GL^{\top}p(t)=(t-x)GL^{\top}p(t)\Rightarrow 
\widetilde{J}(x)GL^{\top}p(t)=tGL^{\top}p(t)
\end{split}
\]
suggests that $GL^{\top}p(t)$ is a vector of the orthogonal polynomials corresponding to $\widetilde{J}(x)$.
However, the first entry of $GL^{\top}p(t)$ vanishes at $t=x$ and thus $GL^{\top}p(t)$ doesn't satisfy the proper initial 
condition $\widetilde{P}_0=1$.
Nevertheless, introducing
\[
\widetilde{p}(t)=\frac{1}{t-x}GL^{\top}p(t)=(\widetilde{P}_0(t), \widetilde{P}_1(t), \dots)^{\top}
\]
solves the problem. Moreover, it is not so difficult to check that the polynomials
\[
\widetilde{P}_j(t)=\sg(d_j(x))\sqrt{|d_j(x)|}\frac{P_j(t)-\frac{P_j(x)}{P_{j+1}(x)}P_{j+1}(t)}{t-x}
\]
are orthogonal with respect to $(t-x)d\mu(t)$ (this can also be thought as a special case of the Christoffel formula).
\end{proof}

The matrix $\widetilde{J}(x)$ is called the shifted Darboux transformation of $J$. 
Since this transformation is based on the Christoffel formula, it is sometimes called the Christoffel transformation \cite{BM04}, \cite{Zh97}.
As a matter of fact, the shifted Darboux transformation of a symmetric matrix is not symmetric but it does possess symmetry properties.
To see this recall that a matrix $H$ is called $G$-symmetric if the matrix $GH$ is symmetric. Obviously, $\widetilde{J}(x)$ is a $G$-symmetric
for the corresponding diagonal matrix $G$. It is worth mentioning that spectral properties of a $G$-symmetric matrix $H$ are equivalent to the corresponding ones of the linear pencil $GH-\lambda G$ (for example, see \cite{DL} where a similar class of matrices was studied).

The following result can be considered as a Favard-type theorem for signed measures with one sign change.

\begin{theorem}
Let $G$ be a diagonal matrix with +1 and -1 on the diagonal. A $G$-symmetric tridiagonal matrix $\widetilde{J}$ corresponds to
the signed measure 
\begin{equation}\label{SM_f}
(t-x)d\mu(t),
\end{equation}
where $d\mu$ is a positive measure, if and only if $\widetilde{J}$ admits the representation
\begin{equation}\label{Gnonn}
\widetilde{J}=GL^{\top}L+xI,
\end{equation}
where $L$ is a bidiagonal matrix of the form \eqref{DefC} and $x$ is a real number.
\end{theorem}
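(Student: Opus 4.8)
The plan is to prove the two implications separately, in both cases pivoting on Proposition \ref{DarbouxTr} together with the classical Favard theorem for positive measures. Morally the statement says that the shifted Darboux transformation is a bijection, up to the standard normalization, between Jacobi matrices of positive measures and $G$-symmetric tridiagonal matrices of signed measures of the form \eqref{SM_f}, and the two directions amount to the two halves of this bijection.

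For the sufficiency (``if'') direction, suppose $\widetilde{J}=GL^{\top}L+xI$ with $x\in\dR$ and $L$ of the form \eqref{DefC}. First I would run the factorization \eqref{LU_C} backwards: the positive diagonal entries of $L$ read off $|D|$, the subdiagonal entries then determine the unit--diagonal matrix $\cL=L|D|^{-1/2}$, and putting $D=G|D|$ recovers $LGL^{\top}=\cL D\cL^{\top}$. I then set $J:=LGL^{\top}+xI=\cL D\cL^{\top}+xI$, which is real symmetric and tridiagonal. The essential bookkeeping step is to check that its off-diagonal entries come out as $a_j=d_jv_j$ and are strictly positive; this is exactly the sign content of \eqref{LU_entries}, and it is what guarantees that $J$ is a genuine Jacobi matrix. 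Once this is in place, Favard's theorem yields a positive measure $d\mu$ having $J$ as its Jacobi matrix, the identity $J-xI=LGL^{\top}$ is precisely the factorization \eqref{DarFac}, and Proposition \ref{DarbouxTr} delivers that $\widetilde{J}=GL^{\top}L+xI$ corresponds to $(t-x)\,d\mu$.

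For the necessity (``only if'') direction, suppose $\widetilde{J}$ is $G$-symmetric, tridiagonal, and corresponds to $(t-x)\,d\mu$ with $d\mu$ positive and $x$ real. I would construct the Jacobi matrix $J$ of $d\mu$ (Favard again), factor $J-xI=LG_0L^{\top}$ as in \eqref{LU_C}--\eqref{DefC} using the standing assumption that this factorization exists, and apply Proposition \ref{DarbouxTr} to obtain that $\widehat{J}:=G_0L^{\top}L+xI$ also corresponds to $(t-x)\,d\mu$. It then remains to identify $\widehat{J}$ with the given $\widetilde{J}$, and $G_0$ with $G$, which produces the representation \eqref{Gnonn}.

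The main obstacle is this last identification, i.e.\ a uniqueness statement for the signed case. For a positive measure the orthonormal polynomials, hence the Jacobi matrix, are determined up to diagonal sign conjugation; here the bilinear form $(f,g)\mapsto\int fg\,(t-x)\,d\mu$ is indefinite, so instead I would invoke the determinantal (Hankel) construction of the orthogonal polynomials $\widetilde{P}_j$ normalized by $\widetilde{P}_0=1$, which remains well defined whenever the relevant Hankel determinants are nonzero, equivalently whenever the factorization \eqref{DarFac} exists. This pins down the recurrence coefficients, so $\widetilde{J}=\widehat{J}$. Finally the diagonal $G$ is forced: $G$-symmetry of a tridiagonal matrix with nonvanishing off-diagonals requires $|(\widetilde{J})_{j,j+1}|=|(\widetilde{J})_{j+1,j}|$ and fixes the ratios $g_{j+1}/g_j=(\widetilde{J})_{j,j+1}/(\widetilde{J})_{j+1,j}$, so $G$ is unique up to an overall sign; matching the normalization of $L$ in \eqref{DefC} gives $G=G_0$. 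The two delicate points to get right are therefore (a) the positivity $a_j>0$ in the sufficiency direction and (b) the nonvanishing of the Hankel determinants underlying uniqueness in the necessity direction, both of which are controlled by the signs $\sg(d_j(x))$ that constitute the diagonal of $G$.
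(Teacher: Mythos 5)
Your proof follows essentially the same route as the paper: the ``only if'' direction is reduced to Proposition \ref{DarbouxTr}, and the ``if'' direction reconstructs $J=LGL^{\top}+xI$, invokes Favard for the positive measure, and transfers back via the (uniqueness of the) factorization \eqref{LU_C}. The paper's own proof is only a few lines and leaves the identification $\widehat{J}=\widetilde{J}$ and the sign bookkeeping for $a_j>0$ implicit, so your extra care on those two points (via the Hankel determinants and the relation $\sg(v_j)=\sg(d_j)$) is a welcome elaboration rather than a deviation.
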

\begin{remark}
It should be emphasized that this statement is rather algebraic and it neither requires the support of the measure to be bounded nor provides with
the condition for the support of the measure to be bounded.
\end{remark}
\begin{proof}
The "only if" part is Proposition \ref{DarbouxTr}. To prove the "if" part notice that if we have matrices $L$ and $G$ then
we can construct the symmetric Jacobi matrix as follows
\[
J=LGL^{\top}+xI.
\]
This Jacobi matrix corresponds to a positive measure $d\mu$. The rest follows from the uniqueness of the Cholesky
factorization \eqref{LU_C} and Proposition \ref{DarbouxTr}.
\end{proof}

\begin{remark} Recall that a matrix $H$ is called $G$-non-negative if the matrix $GH$ is non-negative. It is clear that
the tridiagonal matrix $\widetilde{J}$ can be represented in the form \eqref{Gnonn} if and only if $\widetilde{J}-xI$ is
$G$-non-negative for some diagonal matrix $G$ and $x\in\dR$.
\end{remark}
This Favard-type theorem gives an efficient way to construct tridiagonal matrices corresponding to signed measures of the form \eqref{SM_f}.
Indeed, it would be natural to start with a sequence of signs $\pm 1$ and two bounded sequences of positive numbers. These data produce matrices $G$ and $L$ and, so, a $G$-symmetric matrix $\widetilde{J}$. However, as we will see later, it is impossible to get all the signed measures 
of the form \eqref{SM_f} with bounded support using only bounded sequences. 

\section{Tridiagonal matrices associated with Stahl's counterexample}

Here we translate Stahl's counterexample to the language of tridiagonal matrices. 

Let us first recall that  orthonormal Chebyshev polynomials are defined as 
\begin{equation}\label{ChebyshevP}
T_0(t)=1,\quad T_n(t)=\sqrt{2}\cos(n\arccos t),\quad t\in [-1,1], \quad n=1,2,\dots.
\end{equation}  
It is not so difficult to verify that they satisfy the following difference equation:
\[
\begin{split}
tT_0(t)&=\frac{1}{\sqrt{2}}T_1(t),\\
tT_1(t)&=\frac{1}{\sqrt{2}}T_0(t)+\frac{1}{2}T_2(t),\\
tT_k(t)&=\frac{1}{2}T_{k-1}(t)+\frac{1}{2}T_{k+1}(t), \quad k=2,3,\dots
\end{split}
\]
and the orthogonality relation
\[
\int_{-1}^{1}T_n(t)T_m(t)\frac{dt}{\pi\sqrt{1-t^2}}=\delta_{n m}.
\]
Thus, the Jacobi matrix corresponding to $d\mu_0(t)=\frac{dt}{\pi\sqrt{1-t^2}}$ has the following form
\begin{equation}\label{JacForCheb}
J_0=
\begin{pmatrix}
0 & \frac{1}{\sqrt{2}} & 0 & \cdots   \\
     \frac{1}{\sqrt{2}} & 0 & \frac{1}{{2}} &   \\
     0 & \frac{1}{{2}} & 0 & \ddots  \\
     \vdots & & \ddots & \ddots
\end{pmatrix}
\end{equation}  
and it is a standard fact that the spectrum $\sigma(J_0)$ of $J_0$ is $[-1,1]$ and, consequently, the resolvent set
$\rho(J_0)$ is $\dC\setminus[-1,1]$.

Taking into account the explicit form \eqref{ChebyshevP} of the Chebyshev polynomials,
one can define the shifted Darboux transformation of $J_0$ and the following phenomenon takes place. 
\begin{proposition}
Assume that $\alpha\in(0,1)$ is not rational. Then the shifted Darboux transformation $\widetilde{J_0}(\cos\pi\alpha)$ of the Jacobi matrix $J_0$ corresponds to
the signed measure 
\[
\frac{(t-\cos\pi\alpha)}{\pi\sqrt{1-t^2}}dt
\] 
supported on $[-1,1]$ and it generates an unbounded operator in $\ell^2$.
\end{proposition}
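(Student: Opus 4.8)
The first claim is essentially a direct application of Proposition~\ref{DarbouxTr}, so the plan is first to verify that the factorization \eqref{DarFac} is available for $J_0$ at the point $x=\cos\pi\alpha$. By \eqref{LU_entries} the diagonal entries are $d_j(x)=-a_j P_{j+1}(x)/P_j(x)$, and by \eqref{LU_entries1} the factorization can be carried out step by step precisely when each $d_j(\cos\pi\alpha)$ is finite and nonzero, i.e. when $P_j(\cos\pi\alpha)\ne 0$ for every $j$. Here $P_j=T_j$, and from \eqref{ChebyshevP} one has $T_0\equiv 1$ and $T_j(\cos\pi\alpha)=\sqrt2\cos(j\pi\alpha)$ for $j\ge 1$. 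A zero of this quantity would force $j\alpha$ to be a half-integer, that is $\alpha=(2k+1)/(2j)\in\dQ$, which is excluded by the irrationality of $\alpha$. Hence all $d_j(\cos\pi\alpha)$ are well defined and nonzero, the factorization exists, and Proposition~\ref{DarbouxTr} shows that $\widetilde{J_0}(\cos\pi\alpha)$ corresponds to $(t-\cos\pi\alpha)\,d\mu_0(t)=\frac{(t-\cos\pi\alpha)}{\pi\sqrt{1-t^2}}\,dt$ on $[-1,1]$.

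For the unboundedness I would compute the diagonal entries of $\widetilde{J_0}(\cos\pi\alpha)=GL^\top L+xI$ explicitly. Reading off $L^\top L$ from \eqref{DefC}, its $j$-th diagonal entry equals $|d_j|(1+v_j^2)$, so after multiplying by $G=\sg D$ the diagonal of $\widetilde{J_0}(\cos\pi\alpha)$ is $\widetilde b_j=d_j(1+v_j^2)+\cos\pi\alpha$. Using $d_jv_j=a_j$ together with \eqref{LU_entries} and $a_j=\tfrac12$ for $j\ge 1$, this simplifies to
\[
\widetilde b_j=-\frac{1}{2}\left(\frac{\cos((j+1)\pi\alpha)}{\cos(j\pi\alpha)}+\frac{\cos(j\pi\alpha)}{\cos((j+1)\pi\alpha)}\right)+\cos\pi\alpha,\qquad j\ge 1.
\]
It then suffices to exhibit a subsequence along which $\{\widetilde b_j\}$ is unbounded, since for the standard basis vector $e_j$ one has $\|\widetilde{J_0}(\cos\pi\alpha)e_j\|\ge|\langle \widetilde{J_0}(\cos\pi\alpha)e_j,e_j\rangle|=|\widetilde b_j|$ by Cauchy--Schwarz, so unboundedness of the diagonal forces the operator to be unbounded in $\ell^2$.

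The heart of the argument, and the step I expect to be most delicate, is the number-theoretic input. Since $\alpha$ is irrational, the orbit $\{\,j\alpha \bmod 1\,\}_{j\ge 1}$ is dense in $[0,1)$ by Kronecker's (Weyl's) theorem, so one can choose indices $j_k$ with $j_k\alpha$ arbitrarily close to a half-integer, hence $\cos(j_k\pi\alpha)\to 0$. The point requiring care is that the resulting blow-up of the first term is not cancelled: when $j_k\alpha\approx m+\tfrac12$ one has $(j_k+1)\alpha\approx m+\tfrac12+\alpha$, whence $\cos((j_k+1)\pi\alpha)\to -\sin\pi\alpha\ne 0$, using again that $\alpha\in(0,1)$ is irrational so that $\sin\pi\alpha\ne 0$. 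Consequently the ratio $\cos((j_k+1)\pi\alpha)/\cos(j_k\pi\alpha)$ diverges while the reciprocal term stays bounded, giving $|\widetilde b_{j_k}|\to\infty$. This establishes the unboundedness of $\widetilde{J_0}(\cos\pi\alpha)$ and completes the proof.
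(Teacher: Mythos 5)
Your proposal is correct and follows essentially the same route as the paper: verify that $T_j(\cos\pi\alpha)\ne 0$ by irrationality so the factorization exists and Proposition~\ref{DarbouxTr} applies, then show the diagonal entries $d_j(1+v_j^2)+\cos\pi\alpha$ are unbounded via Kronecker/Weyl density applied to the ratio $\cos((j+1)\pi\alpha)/\cos(j\pi\alpha)$ (the paper packages this same blow-up as the density of $\{\tan j\pi\alpha\}$ in $\dR$ and the bound $|\widetilde b_j|\ge|d_j|-1$). The two arguments are the same in substance.
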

\begin{proof}
In the case of the Jacobi matrix corresponding to the Chebyshev polynomials, the entries of $L$ and $D$ in the factorization
\[
J_0-\cos\pi\alpha I=LDL^{\top}
\]
 are given by
\[
\begin{gathered}
d_0=-\frac{1}{\sqrt{2}}\frac{T_1(\cos\pi\alpha)}{T_0(\cos\pi\alpha)}=-\frac{1}{\sqrt{2}}\cos\pi\alpha, \quad
d_j=-\frac{1}{{2}}\frac{T_{j+1}(\cos\pi\alpha)}{T_j(\cos\pi\alpha)}=-\frac{1}{{2}}\frac{\cos(j+1)\pi\alpha}{\cos j\pi\alpha}\\
v_j=-\frac{T_j(\cos\pi\alpha)}{T_{j+1}(\cos\pi\alpha)}=-\frac{\cos j\pi\alpha}{\cos(j+1)\pi\alpha},
\end{gathered}
\]
where we have used \eqref{ChebyshevP} to simplify the expressions. We see that $T_j(\cos\pi\alpha)=\cos j\pi\alpha\ne 0$ because
of the irrationality of $\alpha$ and thus the factorization \eqref{LU} exists although 
$\cos\pi\alpha\in[-1,1]=\sigma(J_0)=\operatorname{supp}d\mu_0$. 
Next, straightforward calculations give
\[
\begin{split}
\widetilde{J_0}(\cos\pi\alpha)&=GL^{\top}L+\cos\pi\alpha I=\sg D |D|^{1/2}\cL\cL^{\top}|D|^{1/2}+\cos\pi\alpha I=\\
&=\begin{pmatrix}
d_0+v_0^2d_0 +\cos\pi\alpha & \frac{1}{\sqrt[4]{2}}\sqrt{|v_0d_1|} & 0 & \cdots   \\
  \frac{\varepsilon_0\varepsilon_1}{\sqrt[4]{2}}\sqrt{|v_0d_1|}  & d_1+v_1^2d_1+\cos\pi\alpha 
  & \frac{1}{\sqrt{2}}\sqrt{|v_1d_2|}  &   \\
     0 & \frac{\varepsilon_1\varepsilon_2}{\sqrt{2}}\sqrt{|v_1d_2|}& d_2+v_2^2d_2 +\cos\pi\alpha & \ddots  \\
     \vdots & & \ddots & \ddots
\end{pmatrix},
\end{split}
\]
where $\varepsilon_j=\sg d_j=\sg v_j$.
Now it is easy to see that $\widetilde{J_0}(\cos\pi\alpha)$ is unbounded in $\ell^2$. Indeed, for the standard unit vector $e_j\in\ell^2$ we have that
\[
\begin{split}
\Vert \widetilde{J_0}(\cos\pi\alpha)&\Vert\ge \Vert\widetilde{J_0}(\cos\pi\alpha)e_j\Vert=
\Vert\widetilde{J_0}(\cos\pi\alpha)e_j\Vert\Vert e_j\Vert\\
&\ge\left| (\widetilde{J_0}(\cos\pi\alpha)e_j,e_j)_{\ell^2}\right|=|d_j(1+v_j^2)+\cos\pi\alpha|\ge |d_j|-1.
\end{split}
\]
It remains to observe that the sequence
\[
d_j=-\frac{1}{{2}}\frac{\cos(j+1)\pi\alpha}{\cos j\pi\alpha}=\frac{1}{2}(\cos\pi\alpha-\sin\pi\alpha\tan j\pi\alpha)
\]
is not bounded since the set $\{\tan j\pi\alpha\}_{j=0}^{\infty}$ is dense in $\dR$ due to Kronecker's density theorem.
\end{proof}

To  demonstrate more properties of $\widetilde{J_0}(\cos\pi\alpha)$ we need to give some basic definitions related to
operators in spaces with indefinite inner products.
Let us begin by noticing that the diagonal matrix $G$ given by \eqref{LU_C} possesses the following properties
\[
G^2=I,\quad G^*=G,
\]
where $G^*$ is the Hermitian  adjoint with respect to the Hilbert space $\ell^2$. Thus the operator $G$ induces an additional inner product
\[
[f,g]=(Gf,g)_{\ell^2}\quad f,g\in\ell^2.
\]
Therefore, the space $\ell^2(G)$, which is a linear space of elements of $\ell^2$ equipped with the bilinear form $[\cdot,\cdot]$, is 
a Krein space \cite{AI}.
By definition, the norm of a Krein space
is the same as the norm of the Hilbert space generating the Krein space~\cite{AI}. In particular, the norm of the Krein space $\ell^2(G)$ is 
the norm of the Hilbert space $\ell^2$.
\begin{theorem}\label{SA_th}
The operator $\widetilde{J_0}(\cos\pi\alpha)$ is self-adjoint in $\ell^2(G)$. Moreover, the operator 
$\widetilde{J_0}(\cos\pi\alpha)-\cos\pi\alpha I$ is non-negative
in $\ell^2(G)$, that is
\begin{equation}\label{SemiB}
\left[\left(\widetilde{J_0}(\cos\pi\alpha)-\cos\pi\alpha I\right)f,f\right]\ge 0
\end{equation}
for any $f$ from the domain of $\widetilde{J_0}(\cos\pi\alpha)$.
\end{theorem}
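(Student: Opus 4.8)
The plan is to treat the two assertions separately, disposing of the non-negativity \eqref{SemiB} at once and reserving the real work for the self-adjointness. By the Favard-type representation \eqref{Gnonn} one has $\widetilde{J_0}(\cos\pi\alpha)-\cos\pi\alpha\,I=GL^{\top}L$, so for $f$ in the domain
\[
\left[\left(\widetilde{J_0}(\cos\pi\alpha)-\cos\pi\alpha\,I\right)f,f\right]=\left(G\cdot GL^{\top}Lf,f\right)_{\ell^2}=(L^{\top}Lf,f)_{\ell^2}=\|Lf\|^2\ge0,
\]
where I used $G^2=I$; this is immediate on finitely supported sequences and extends to the whole domain by approximation in the graph norm. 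This already gives \eqref{SemiB}.

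For the self-adjointness I would exploit that $G$ is a bounded self-adjoint involution, $G^2=I$ and $G^*=G$. A direct computation of the Krein adjoint gives $\widetilde{J_0}(\cos\pi\alpha)^{[*]}=G\,\widetilde{J_0}(\cos\pi\alpha)^{*}G$, so that $\widetilde{J_0}(\cos\pi\alpha)$ is self-adjoint in $\ell^2(G)$ if and only if the Hilbert-space operator
\[
B=G\,\widetilde{J_0}(\cos\pi\alpha)=L^{\top}L+\cos\pi\alpha\,G
\]
is self-adjoint in $\ell^2$; since $G$ is a bounded bijection the domains are identified and the reduction is clean. The point of this reduction is that $B$ is a genuine real symmetric tridiagonal matrix: its off-diagonal entries are $\beta_k=v_k\sqrt{|d_kd_{k+1}|}$, which are non-zero because $\cos j\pi\alpha\ne0$ for every $j$. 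Hence $B$ (equivalently its restriction to finitely supported sequences) has deficiency indices $(0,0)$ or $(1,1)$, and after conjugating by a diagonal matrix of signs to render the off-diagonal entries positive, which alters neither self-adjointness nor the moduli $|\beta_k|$, essential self-adjointness will follow from Carleman's condition $\sum_k|\beta_k|^{-1}=\infty$.

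The technical heart, and the step I expect to be the main obstacle, is that the $\beta_k$ are unbounded, so Carleman's condition is not automatic and must be checked by hand. Using $v_kd_k=a_k$ and the explicit value $d_k=-\tfrac12\cos(k+1)\pi\alpha/\cos k\pi\alpha$ one finds $|\beta_k|=a_k\sqrt{|d_{k+1}/d_k|}$, whence
\[
\frac{1}{|\beta_k|}=\frac{1}{a_k}\cdot\frac{|\cos(k+1)\pi\alpha|}{\sqrt{|\cos k\pi\alpha|\,|\cos(k+2)\pi\alpha|}}\ge|\cos(k+1)\pi\alpha|,
\]
because $a_k\le1$ and the cosines in the denominator are bounded by $1$. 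Since $\cos(k+1)\pi\alpha$ does not tend to $0$, indeed $|\cos(k+1)\pi\alpha|\ge\tfrac12$ for infinitely many $k$ by Kronecker's density theorem already invoked above, the series $\sum_k|\cos(k+1)\pi\alpha|$ diverges, and therefore so does $\sum_k|\beta_k|^{-1}$. This forces the deficiency indices of $B$ to be $(0,0)$, so $B$ is essentially self-adjoint; its closure is self-adjoint in $\ell^2$, and undoing the reduction via $G$ shows that $\widetilde{J_0}(\cos\pi\alpha)$ is self-adjoint in $\ell^2(G)$.
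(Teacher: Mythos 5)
Your proof is correct and follows essentially the same route as the paper: reduce $G$-self-adjointness to self-adjointness of the symmetric Jacobi matrix $G\widetilde{J_0}(\cos\pi\alpha)=L^{\top}L+\cos\pi\alpha\,G$ in $\ell^2$, verify Carleman's condition via Kronecker's density theorem, and obtain non-negativity from $[(GL^{\top}L)f,f]=\|Lf\|^2$. Your lower bound $1/|\beta_k|\ge|\cos(k+1)\pi\alpha|$ is a slightly more direct way to get divergence than the paper's trigonometric identity, and your explicit remark about conjugating away the signs of the off-diagonal entries is a small point the paper leaves implicit, but the argument is the same in substance.
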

\begin{proof}
It is clear from the definition of the Krein space that the self-adjointness of $\widetilde{J_0}(\cos\pi\alpha)$ in $\ell^2(G)$ is equivalent
to the self-adjointness of $G\widetilde{J_0}(\cos\pi\alpha)$ in $\ell^2$. So, let us show that the classical symmetric Jacobi matrix
\[
\begin{gathered}
G\widetilde{J_0}(\cos\pi\alpha)=L^{\top}L+\cos\pi\alpha G=\\
\begin{pmatrix}
\varepsilon_0(d_0+v_0^2d_0 +\cos\pi\alpha) & \frac{\varepsilon_0}{\sqrt[4]{2}}\sqrt{|v_0d_1|} & 0 & \cdots   \\
  \frac{\varepsilon_0}{\sqrt[4]{2}}\sqrt{|v_0d_1|}  & \varepsilon_1(d_1+v_1^2d_1+\cos\pi\alpha) 
  & \frac{\varepsilon_1}{\sqrt{2}}\sqrt{|v_1d_2|}  &   \\
     0 & \frac{\varepsilon_1}{\sqrt{2}}\sqrt{|v_1d_2|}& \varepsilon_2(d_2+v_2^2d_2 +\cos\pi\alpha) & \ddots  \\
     \vdots & & \ddots & \ddots
\end{pmatrix}
\end{gathered}
\]
is self-adjoint in $\ell^2$. To do this we will need Carleman's sufficiency condition \cite{Ach61} (see also \cite[Corollary 4.5]{Si}).
This sufficiency condition states that if the series 
\[
\sum_{k=0}^{\infty}\frac{1}{a_k}
\]
is divergent then the Jacobi operator \eqref{JacOp} is self-adjoint. In the case of $G\widetilde{J_0}(\cos\pi\alpha)$
we have the series 
\begin{equation}\label{SerCh}
\begin{split}
\sum_{k=1}^{\infty}\frac{1}{\frac{1}{\sqrt{2}}\sqrt{v_kd_{k+1}}}&=
{2}\sum_{k=1}^{\infty}
\sqrt{\frac{\cos^2(k+1)\pi\alpha}{|\cos k\pi\alpha\cdot\cos(k+2)\pi\alpha|}}
\\
&=2\sqrt{2}\sum_{k=1}^{\infty}
\sqrt{\frac{\cos^2(k+1)\pi\alpha}{|\cos 2\pi\alpha-1+2\cos^2(k+1)\pi\alpha|}}
\end{split}
\end{equation}
to check. According to Kronecker's density theorem the set $\{\cos(k+1)\pi\alpha\}_{k=1}^{\infty}$ is dense in $[-1,1]$. So, the series \eqref{SerCh} is divergent 
which proves the self-adjointness of $\widetilde{J_0}(\cos\pi\alpha)$ in $\ell^2(G)$. To see the non-negativity it is
enough to verify \eqref{SemiB} for finitely supported vectors $f$ which can be easily done by the definition of 
$\widetilde{J_0}(\cos\pi\alpha)$:
\begin{equation}\label{NonDar}
\begin{split}
\left[\left(\widetilde{J_0}(\cos\pi\alpha I)-\cos\pi\alpha I\right)f,f\right]&=[(GL^{\top}L)f,f]=(L^{\top}Lf,f)_{\ell^2}\\
&=(Lf,Lf)_{\ell^2}\ge 0.
\end{split}
\end{equation}
\end{proof}
Note that the definition of the shifted Darboux transformation was only used in \eqref{NonDar} and so it is true
for the shifted Darboux transformations of any symmetric Jacobi matrix $J$.

\section{Spectra of the Stahl tridiagonal operators}

In the previous section we showed the self-adjointness of $\widetilde{J_0}(\cos\pi\alpha)$ in $\ell^2(G)$.
However, knowing that an operator is self-adjoint in a Krein space doesn't say much because the spectrum of a self-adjoint operator
in a Krein space can be fairly arbitrary and the structure of the operator can be rather wild \cite{L1982}.
Thus, to expect reasonable properties from self-adjoint operators it makes sense to study narrower classes of operators in Krein spaces. For instance, one of such classes is the class of definitizable operators for which spectral calculus was constructed in \cite{J1981},  \cite{L1982}.

\begin{definition}[\cite{L1982}] A self-adjoint operator $A$ in the Krein space $\ell^2(G)$ is called
definitizable if the resolvent set $\rho(A)$ of $A$ is not empty and there exists a polynomial $h$ such that
\[
 [h(A)f,f]\ge 0, \quad f\in\ell^2(G),
\]
and $f$ has only a finite number of nonzero elements. 
\end{definition}
 
 Now one can see that due to Theorem \ref{SA_th} the operator $\widetilde{J_0}(\cos\pi\alpha)$ is a good candidate to be
 a definitizable operator in $\ell^2(G)$ with the definitizing polynomial $h(t)=t-\cos\pi\alpha$.
To proceed with that we need the following properties of definitizable operators.

\begin{proposition}[Corollaries 1 and 2, Section II.2 in \cite{L1982}]\label{SpecDef}
Let $A$ be a definitizable operator in $\ell^2(G)$. Then the following statements hold true.
\begin{enumerate}
\item[(i)] The spectrum of $A$ is not empty.
\item[(ii)] If $A$ has a bounded spectrum then $A$ is bounded.
\end{enumerate}
\end{proposition}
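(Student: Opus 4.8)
The plan is to reduce both assertions to the behaviour of a single scalar function attached to the resolvent, and to exploit the definitizing inequality to show that this function is of Nevanlinna (Herglotz) type. Fix a finitely supported $f$ (so that $h(A)f$ is defined) and set
\[
N_f(\lambda)=[(A-\lambda)^{-1}h(A)f,f],\qquad \lambda\in\rho(A)\setminus\dR .
\]
Using the resolvent identity together with $\bigl((A-\lambda)^{-1}\bigr)^{[*]}=(A-\bar\lambda)^{-1}$ and the fact that $h(A)$ commutes with the resolvent and is $G$-self-adjoint, a direct computation gives
\[
\IM N_f(\lambda)=\IM\lambda\,[\,h(A)u,u\,],\qquad u=(A-\bar\lambda)^{-1}f .
\]
Since $A$ is definitizable, $[h(A)u,u]\ge 0$, so $\IM N_f(\lambda)$ carries the sign of $\IM\lambda$ wherever $N_f$ is defined. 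This identity is the device on which both statements rest.

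For (i) I would argue by contradiction. If $\sigma(A)=\emptyset$ then $\rho(A)=\dC$, so $N_f$ is an entire function with nonnegative imaginary part on the upper half-plane; an entire Nevanlinna function is necessarily of the form $N_f(\lambda)=a\lambda+b$ with $a\ge 0$, $b\in\dR$ (the representing measure of a Nevanlinna function that continues analytically across all of $\dR$ must vanish). On the other hand, writing
\[
(A-\lambda)^{-1}h(A)=\frac{h(A)-h(\lambda)}{A-\lambda}+h(\lambda)(A-\lambda)^{-1},
\]
the first summand is a polynomial in $A$ and $\lambda$, so $N_f(\lambda)=h(\lambda)F_f(\lambda)+p_f(\lambda)$, where $F_f(\lambda)=[(A-\lambda)^{-1}f,f]$ and $\deg p_f<\deg h$. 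Comparing with $N_f(\lambda)=a\lambda+b$ shows that $F_f=(a\lambda+b-p_f)/h$ is rational; but $F_f$ is also entire, hence a polynomial of degree at most $\max(1,\deg h-1)-\deg h\le 0$, i.e.\ a constant. Finally, if $F_f\equiv c$ for every $f$, then $\tfrac{d}{d\lambda}F_f(\lambda)=[(A-\lambda)^{-2}f,f]\equiv 0$, and polarization (legitimate since $(A-\lambda)^{-2}$ is bounded and $[\cdot,\cdot]$ is nondegenerate) forces $(A-\lambda)^{-2}=0$, contradicting invertibility of the resolvent. Hence $\sigma(A)\ne\emptyset$.

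For (ii) I would use the Riesz--Dunford calculus. Suppose $\sigma(A)\subset\{|\lambda|<R\}$, so every circle $\{|\lambda|=R'\}$ with $R'>R$ lies in $\rho(A)$ and
\[
P=\frac{1}{2\pi i}\oint_{|\lambda|=R'}(\lambda-A)^{-1}\,d\lambda
\]
is a bounded projection independent of $R'$. For $x\in\dom A$ the identity $(\lambda-A)^{-1}x=\lambda^{-1}x+\lambda^{-1}(\lambda-A)^{-1}Ax$ yields $Px=x+\tfrac{1}{2\pi i}\oint_{|\lambda|=R'}\lambda^{-1}(\lambda-A)^{-1}Ax\,d\lambda$. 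Letting $R'\to\infty$ and using a bound $\|(A-\lambda)^{-1}\|=O(|\lambda|^{-1})$ drives the second term to $0$, while $P$ does not depend on $R'$; hence $Px=x$. As $\dom A$ is dense and $P$ is bounded, $P=I$, and then $A=\tfrac{1}{2\pi i}\oint_{|\lambda|=R'}\lambda(\lambda-A)^{-1}\,d\lambda$ is a bounded operator.

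The \emph{main obstacle} is precisely the resolvent decay $\|(A-\lambda)^{-1}\|=O(|\lambda|^{-1})$ at infinity used in (ii): for an arbitrary closed operator bounded spectrum does \emph{not} force boundedness, so this estimate is exactly where definitizability must enter. I would extract it from the resolvent bounds for definitizable operators in \cite{L1982}, where the growth of $(A-\lambda)^{-1}$ is controlled polynomially in terms of $h$ and $1/\IM\lambda$ and the non-real spectrum is shown to be finite; combining these bounds with the boundedness of $\sigma(A)$ gives the required decay on the circles $\{|\lambda|=R'\}$. By comparison the scalar step in (i) is soft, the only delicate point there being the passage from the Herglotz property to linearity of the entire function $N_f$.
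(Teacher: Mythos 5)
The first thing to note is that the paper offers no proof of this proposition at all: it is imported verbatim from Langer's lecture notes (Corollaries 1 and 2 of Section II.2 in \cite{L1982}), where it is obtained from the existence of the spectral function of a definitizable operator. So any argument is necessarily ``a different route''. Your part (i) is a correct and essentially self-contained alternative: the identity $\IM N_f(\lambda)=\IM\lambda\,[h(A)u,u]$ with $u=(A-\bar\lambda)^{-1}f$ checks out via the resolvent identity and $G$-self-adjointness of $h(A)$; an entire function that is Herglotz in the upper half-plane and symmetric under conjugation is indeed affine (Stieltjes inversion kills the representing measure); and the derivative-plus-polarization step cleanly rules out $\rho(A)=\dC$. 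Two small caveats: the paper's definition imposes $[h(A)f,f]\ge 0$ only for finitely supported $f$, whereas you apply it to $u=(A-\bar\lambda)^{-1}f$, so you need the finitely supported vectors to form a core for $h(A)$ (or to work with Langer's definition, where the inequality is required on all of $\dom h(A)$); and the degenerate case $\deg h=0$ should be handled separately (there $\pm G\ge 0$ and one is back to a self-adjoint operator in a Hilbert space).

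Part (ii), however, contains a genuine gap, and you have located it yourself: the decay $\|(A-\lambda)^{-1}\|=O(|\lambda|^{-1})$ on large circles \emph{is} the content of the statement. For a general closed operator, bounded --- even empty --- spectrum forces no decay of the resolvent at infinity whatsoever (take $A=V^{-1}$ for the Volterra operator $V$: then $\sigma(A)=\emptyset$ yet $A$ is unbounded), so this estimate is not available generically, and ``extracting it from \cite{L1982}'' is circular, since that is precisely the source the proposition is quoted from. Langer's own route to (ii) goes through the spectral function $E$ with finitely many critical points: if $\sigma(A)$ is bounded, then $\infty$ is not a critical point, $E(\Delta)=I$ for a sufficiently large compact interval $\Delta$, and $A$ is recovered as a bounded integral of $t$ against $dE$. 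If you insist on staying within your scalar framework, the natural substitute is to note that for bounded real spectrum the representing measure of $N_f$ is compactly supported, which yields the asymptotics of $\lambda F_f(\lambda)$ for each finitely supported $f$; but promoting these weak statements on a dense set to a norm bound on $\lambda(A-\lambda)^{-1}$ still requires a uniform boundedness argument that you do not supply. As written, (ii) is not proved.
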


In order to verify whether $\widetilde{J_0}(\cos\pi\alpha)$ is definitizable in $\ell^2(G)$ it remains to understand what actually happens with the spectrum under the Darboux transformation.
To this end, recall that for bounded operators $A$ and $B$ in a Hilbert space the spectra of the products
$AB$ and $BA$ coincide away from zero. In the more general situation of unbounded operators $A$ and $B$
the following statement holds true.
\begin{proposition}[\cite{HKM00}]\label{HKM_th}
Suppose that $\rho(AB)\ne\emptyset$ and $\rho(BA)\ne\emptyset$. Then the relation
\[
\sigma(AB)\setminus\{0\}=\sigma(BA)\setminus\{0\}
\]
is valid.
\end{proposition}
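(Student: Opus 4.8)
The plan is to reduce the claimed equality to a single resolvent-set inclusion and then to drive that inclusion by the classical formula relating the resolvents of $AB$ and $BA$, paying close attention to domains in the unbounded setting. First, observe that the hypotheses are symmetric under interchanging $A$ and $B$, so it suffices to prove $\rho(BA)\setminus\{0\}\subseteq\rho(AB)\setminus\{0\}$; the reverse inclusion then follows by swapping the roles of $A$ and $B$. Before starting I record that both $AB$ and $BA$ are closed operators: since $\rho(AB)\ne\emptyset$, there is a point at which $AB$ minus a scalar has a bounded everywhere-defined inverse, and such an operator is closed, whence $AB$ is closed; the same argument applies to $BA$. Closedness is what makes the inverse formula below meaningful and permits the use of the closed-graph theorem.

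So fix $\lambda\ne 0$ with $\lambda\in\rho(BA)$, and aim to show $\lambda\in\rho(AB)$. The engine is the identity $(AB-\lambda)^{-1}=\tfrac1\lambda\bigl(A(BA-\lambda)^{-1}B-I\bigr)$, which for bounded operators is checked in one line from the intertwining relation $AB\cdot A=A\cdot BA$ together with $BA(BA-\lambda)^{-1}=I+\lambda(BA-\lambda)^{-1}$. To promote it to unbounded $A,B$ I would define $R_\lambda f=\tfrac1\lambda\bigl(A(BA-\lambda)^{-1}Bf-f\bigr)$ for $f\in\dom B$; this is legitimate because $(BA-\lambda)^{-1}$ maps into $\dom(BA)\subseteq\dom A$, so the outer $A$ may be applied. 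The remaining steps are: (a) show $R_\lambda$ is bounded on $\dom B$ and hence extends to a bounded operator on the whole space; (b) show $\ran R_\lambda\subseteq\dom(AB)$; and (c) verify $(AB-\lambda)R_\lambda=I$ and $R_\lambda(AB-\lambda)=I$ on the relevant domains. Granting (a)--(c), the closedness recorded above yields $\lambda\in\rho(AB)$.

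The hard part will be step (a): the boundedness of $K_\lambda:=A(BA-\lambda)^{-1}B$ on $\dom B$. The inner resolvent is bounded, and closedness of $A$ already makes $A(BA-\lambda)^{-1}$ a bounded everywhere-defined operator by the closed-graph theorem; but the trailing unbounded factor $B$ destroys boundedness of the naive product, so this is exactly where the second standing hypothesis $\rho(AB)\ne\emptyset$ is indispensable (without it the inclusion genuinely fails). To exploit it I would organize the computation through the block operator matrix $\bigl(\begin{smallmatrix}-\lambda & A\\ B & -I\end{smallmatrix}\bigr)$ on the product space and read off its two Schur complements, whose invertibility is governed by $AB-\lambda$ and by $BA-\lambda$ respectively; the formal factorization of this matrix encodes the resolvent formula and, crucially, expresses $K_\lambda$ through the bounded operators $(BA-\lambda)^{-1}$, $B\,A(BA-\lambda)^{-1}=I+\lambda(BA-\lambda)^{-1}$, and the resolvent $(AB-\mu)^{-1}$ at a point $\mu\in\rho(AB)$, after which a closed-graph argument for $K_\lambda$ closes the gap. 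Once (a) is secured, steps (b) and (c) are routine domain-respecting manipulations, and the symmetric inclusion completes the proof.
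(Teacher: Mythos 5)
First, a remark on the comparison itself: the paper offers no proof of this proposition --- it is quoted verbatim from [HKM00] and used as a black box --- so the only benchmark is the argument in that reference, whose strategy (the resolvent identity $(AB-\lambda)^{-1}=\tfrac1\lambda\bigl(A(BA-\lambda)^{-1}B-I\bigr)$, organized via a Frobenius--Schur factorization of a block operator matrix) is indeed the one you outline. Your reduction to one inclusion by symmetry, the closedness of $AB$ and $BA$, the boundedness of $A(BA-\lambda)^{-1}$ by the closed-graph theorem, and the verification that $R_\lambda g=\tfrac1\lambda\bigl(A(BA-\lambda)^{-1}Bg-g\bigr)$ lies in $\dom(AB)$ and satisfies $(AB-\lambda)R_\lambda g=g$ for $g\in\dom B$ are all correct. (One small caveat inside your ``routine'' step (c): the left-inverse identity $R_\lambda(AB-\lambda)f=f$ only makes literal sense when $(AB-\lambda)f\in\dom B$, so injectivity of $AB-\lambda$ on all of $\dom(AB)$ needs its own two-line argument: $ABf=\lambda f$ forces $Bf\in\dom(BA)$ and $(BA-\lambda)Bf=0$, whence $Bf=0$ and $f=0$.)

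The genuine gap is step (a), and as written the closing move is circular. The closed-graph theorem yields boundedness only for a closed operator defined on the whole space, while $K_\lambda=A(BA-\lambda)^{-1}B$ lives on $\dom B$; its closure is contained in $\lambda(AB-\lambda)^{-1}+I$ with domain $\ran(AB-\lambda)$, so showing that this domain is everything is precisely the surjectivity of $AB-\lambda$ you are trying to prove. Likewise, no identity expresses $K_\lambda$ on $\dom B$ through the three bounded operators you list: the formal Schur factorizations of $\bigl(\begin{smallmatrix}-\lambda & A\\ B & -I\end{smallmatrix}\bigr)$ hold only on proper subdomains (e.g.\ $B(AB-\mu)f=(BA-\mu)Bf$ requires $ABf\in\dom B$), which is exactly where the work lies. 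What the hypothesis $\rho(AB)\ni\mu$ actually buys is a formula for $K_\lambda(AB-\mu)^{-1}$, not for $K_\lambda$: the operator $W:=R_\lambda(AB-\mu)^{-1}=\tfrac1\lambda\bigl(A(BA-\lambda)^{-1}\cdot B(AB-\mu)^{-1}-(AB-\mu)^{-1}\bigr)$ is everywhere defined and bounded, since $B(AB-\mu)^{-1}$ is bounded by the same closed-graph argument ($\dom(AB)\subseteq\dom B$). From $(AB-\lambda)W=(AB-\mu)^{-1}$ and $\ran W\subseteq\dom\bigl((AB)^2\bigr)$, where the two shifted factors commute, one gets $(AB-\lambda)\bigl[(AB-\mu)W\bigr]=(AB-\mu)(AB-\lambda)W=I$, so the bounded operator $(AB-\mu)W=(AB-\mu)^{-1}+(\lambda-\mu)W$ is a right inverse of $AB-\lambda$; combined with injectivity and closedness it is the two-sided bounded inverse, i.e.\ $\lambda\in\rho(AB)$. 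With this insertion your plan becomes a proof; without it, the decisive use of $\rho(AB)\ne\emptyset$ is announced but not performed.
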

Based on this theorem we can get the full information about the spectrum of the operator $\widetilde{J_0}(\cos\pi\alpha)$.
\begin{theorem}
We have that
\[
\sigma(\widetilde{J_0}(\cos\pi\alpha))=\dC
\]
and, thus, the $G$-self-adjoint operator $\widetilde{J_0}(\cos\pi\alpha)$ is not definitizable.
\end{theorem}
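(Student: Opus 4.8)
The plan is to argue by contradiction, exploiting the product structure behind the Darboux transformation together with Propositions~\ref{HKM_th} and~\ref{SpecDef}. Writing $A=L$ and $B=GL^{\top}$, the factorization \eqref{LU_C} and the definition of the shifted Darboux transformation give
\[
AB=LGL^{\top}=J_0-\cos\pi\alpha\, I,\qquad BA=GL^{\top}L=\widetilde{J_0}(\cos\pi\alpha)-\cos\pi\alpha\, I,
\]
so the two relevant operators differ exactly by the order of the factors. Since $\sigma(J_0)=[-1,1]$, the spectrum of $AB=J_0-\cos\pi\alpha\,I$ is the bounded interval $[-1-\cos\pi\alpha,\,1-\cos\pi\alpha]$, and in particular $\rho(AB)\ne\emptyset$.

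Now suppose, for contradiction, that $\rho(\widetilde{J_0}(\cos\pi\alpha))\ne\emptyset$. By Theorem~\ref{SA_th} the operator $\widetilde{J_0}(\cos\pi\alpha)$ is $G$-self-adjoint and $\widetilde{J_0}(\cos\pi\alpha)-\cos\pi\alpha\,I$ is non-negative in $\ell^2(G)$, so $h(t)=t-\cos\pi\alpha$ is a definitizing polynomial; together with the assumption $\rho(\widetilde{J_0}(\cos\pi\alpha))\ne\emptyset$ this makes $\widetilde{J_0}(\cos\pi\alpha)$ definitizable. The same assumption says $\rho(BA)=\rho(\widetilde{J_0}(\cos\pi\alpha))-\cos\pi\alpha\ne\emptyset$. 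Hence both hypotheses of Proposition~\ref{HKM_th} hold, and I would apply it to conclude
\[
\sigma(BA)\setminus\{0\}=\sigma(AB)\setminus\{0\}\subseteq[-1-\cos\pi\alpha,\,1-\cos\pi\alpha].
\]
Adding back the single point $0$ keeps the set bounded, so $\sigma(\widetilde{J_0}(\cos\pi\alpha))=\sigma(BA)+\cos\pi\alpha$ is bounded.

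At this point the contradiction is immediate: a definitizable operator with bounded spectrum is bounded by Proposition~\ref{SpecDef}(ii), which contradicts the fact established above that $\widetilde{J_0}(\cos\pi\alpha)$ is unbounded in $\ell^2$. Therefore $\rho(\widetilde{J_0}(\cos\pi\alpha))=\emptyset$, i.e. $\sigma(\widetilde{J_0}(\cos\pi\alpha))=\dC$. Since definitizability requires a nonempty resolvent set, the same conclusion immediately shows that $\widetilde{J_0}(\cos\pi\alpha)$ is not definitizable.

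The step I expect to be most delicate is the application of Proposition~\ref{HKM_th} to the unbounded factors $A=L$ and $B=GL^{\top}$: one must identify the products $AB$ and $BA$ on their natural domains with $J_0-\cos\pi\alpha\,I$ and $\widetilde{J_0}(\cos\pi\alpha)-\cos\pi\alpha\,I$ (note that $AB$ turns out to be bounded even though $A$ and $B$ individually are not), and verify that the resolvent-set hypotheses are genuinely in force. Here the hypothesis $\rho(AB)\ne\emptyset$ comes for free from $\sigma(J_0)=[-1,1]$, while $\rho(BA)\ne\emptyset$ is precisely the assumption driving the contradiction, so the logic is self-consistent once the domain bookkeeping in Proposition~\ref{HKM_th} is checked.
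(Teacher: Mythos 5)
Your argument is correct and follows essentially the same route as the paper: the same choice $A=L$, $B=GL^{\top}$, the same appeal to Proposition~\ref{HKM_th} under the contradiction hypothesis $\rho(BA)\ne\emptyset$, definitizability via Theorem~\ref{SA_th} with $h(t)=t-\cos\pi\alpha$, and the final clash between Proposition~\ref{SpecDef}(ii) and the unboundedness of $\widetilde{J_0}(\cos\pi\alpha)$ established earlier. Your write-up is in fact slightly more explicit than the paper's about why boundedness of the spectrum yields the contradiction, and you correctly flag the domain bookkeeping for the unbounded factors as the one delicate point.
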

\begin{proof}
To begin with, observe that 
\[
[-1,1]-\cos\pi\alpha=[-1-\cos\pi\alpha,1-\cos\pi\alpha]=\sigma(J_0-\cos\pi\alpha I)=\sigma(LGL^{\top}).
\]
Now let us set $A=L$ and $B=GL^{\top}$. Then
\[
\sigma(BA)=\sigma(GL^{\top}L)=\sigma(\widetilde{J_0}(\cos\pi\alpha)-\cos\pi\alpha I)=
\sigma(\widetilde{J_0}(\cos\pi\alpha))-\cos\pi\alpha.
\]
Clearly, the resolvent set $\rho(AB)$ is not empty because $AB$ is bounded. If the relation
$\rho(BA)\ne\emptyset$ were true then Theorem \ref{SA_th} would imply the definitizability of 
$\widetilde{J_0}(\cos\pi\alpha)$ in $\ell^2(G)$ with the definitizing polynomial $h(t)=t-\cos\pi\alpha$ and, in turn,
from Proposition \ref{HKM_th} we would get that
$\sigma(\widetilde{J_0}(\cos\pi\alpha))=[-1,1]$ which cannot be true for an unbounded definitizable operator
because of Proposition \ref{SpecDef}.
\end{proof}

Noteworthy that another class of $G$-self-adjoint operators with empty spectra appeared in a different context and related
to Clifford algebras \cite{KT2011}, \cite{KP2012}. 
\begin{remark}\label{SimonEx}
As a matter of fact, a similar situation when the spectrum blows up to the whole complex plane after the Darboux transformations
happens even in the definite setting of positive measures. Really, it is shown in \cite[Corollary 4.21]{Si} that there is a self-adjoint
Jacobi operator $J=LL^{\top}$ (i.e. it corresponds to a determinate moment problem) such that the Darboux transformation
$\widetilde{J}=L^{\top}L$ is symmetric but not self-adjoint (i.e. it corresponds to an indeterminate moment problem).
In other words, $\sigma(J)\subseteq\dR$ and at the same time $\sigma(\widetilde{J})=\dC$.
\end{remark}

\section{Definitizability and spurious poles}

In this section a relation between definitizability and spurious poles of  Pad\'e approximants is shown. Recall that spurious poles
 are those poles of  Pad\'e approximants that do not correspond to analytic properties of the original function \cite{St98}.
 For instance, for the diagonal Pad\'e approximants to the function \eqref{StahlEx} any point in $\dC\setminus[-1,1]$
 is a spurious pole. It should also be pointed out that convergence of diagonal Pad\'e approximants appears as a strong resolvent
 convergence of the truncations of the underlying tridiagonal matrix \cite{DD07}, \cite{Si} and, so, the concept of spurious poles is a particular case of the effect
 of spectral pollution \cite{D2004}.

The class of analytic functions we are concerned with here is the set of the Cauchy transforms of signed measures of the form
\begin{equation}
\sF(\lambda)=\int_{-1}^{1}\frac{1}{t-\lambda}(t-x)d\mu(t),
\end{equation} 
where $d\mu$ is a positive probability measure and $x\in(-1,1)$. It is easy to see that $\sF$ can be represented as follows
\[
\sF(\lambda)=(\lambda-x)\int_{-1}^{1}\frac{1}{t-\lambda}d\mu(t)+1=(\lambda-x)F(\lambda)+1,
\]
where $F$ is a Markov function. Basically, any real rational perturbation $r_1F+r_2$ of a Markov function is called a definitizable
function \cite{J2000}. However, there are special classes of definitizable functions with rather different nature (for instance,
see \cite{DD07}, \cite{DD09}, \cite{Gon75}, \cite{Lop80}). In particular, the Cauchy transform $\sF$ belongs to the class of definitizable functions
studied in \cite{DD09}.

The main goal of this section is to give some spectral sense to the following result.

\begin{proposition}[Theorem 5.5 from \cite{DD09}]\label{DDth} The poles of the diagonal Pad\'e approximants to $\sF$ are contained in
a bounded subset of the real line if and only if
\begin{equation}\label{Bcond}
\sup_{j\in\dN}\left|a_j\frac{P_{j+1}(x)}{P_j(x)}\right|<\infty,
\end{equation} 
where $P_j$ is the orthogonal polynomial of degree $j$ corresponding to the positive measure $d\mu$ and $a_j$ is one of the recurrence coefficients \eqref{TriTerm}.
\end{proposition}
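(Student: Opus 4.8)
The plan is to route everything through boundedness of the operator $\widetilde{J}(x)$ and the definitizability machinery already in place. First I would record the elementary reformulation of the hypothesis: by \eqref{LU_entries} we have $d_j(x)=-a_j P_{j+1}(x)/P_j(x)$, so condition \eqref{Bcond} says precisely that $\sup_j|d_j(x)|<\infty$. On the other side, the poles of the $n$-th diagonal Pad\'e approximant are the zeros of $\cP_n=\widetilde{P}_n$, i.e.\ of the polynomials orthogonal with respect to $(t-x)d\mu$, and these are the eigenvalues of the $n\times n$ truncation $\widetilde{J}_n(x)$ of $\widetilde{J}(x)$. Thus the assertion becomes: the eigenvalues of all truncations lie in one bounded real set if and only if $\sup_j|d_j(x)|<\infty$.

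The first main step is the equivalence $\sup_j|d_j(x)|<\infty \iff \widetilde{J}(x)$ is bounded. Since $\widetilde{J}(x)-xI=GL^{\top}L$ and $G$ is a unitary involution, $\widetilde{J}(x)$ is bounded iff $L^{\top}L$ is, iff $L$ is, because $\|L^{\top}L\|=\|L\|^2$. Reading off \eqref{DefC}, the bidiagonal matrix $L$ has diagonal $\sqrt{|d_j|}$ and subdiagonal $v_j\sqrt{|d_j|}$, so $L$ is bounded iff both $\sup_j|d_j|<\infty$ and $\sup_j v_j^2|d_j|=\sup_j a_j^2/|d_j|<\infty$. The key observation is that the second condition is automatic given the first: from \eqref{LU_entries1} one has $a_j^2/d_j=b_{j+1}-x-d_{j+1}$, whence $a_j^2/|d_j|\le \sup_k|b_k|+|x|+\sup_k|d_k|$, and the $b_k$ are bounded because $\mathrm{supp}\,\mu\subseteq[-1,1]$ makes $J$ bounded. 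Hence $\widetilde{J}(x)$ is bounded exactly when \eqref{Bcond} holds.

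The second step links operator boundedness to the location of the poles. Reality of the poles is free here: the single sign change of $(t-x)d\mu$ forces at least $n-1$ of the $n$ zeros of the real polynomial $\cP_n$ into $[-1,1]$, and a real polynomial of degree $n$ with $n-1$ real zeros has its last zero real as well, so every pole is real and at most one per degree can leave $[-1,1]$. It thus remains to track boundedness. If $\widetilde{J}(x)$ is bounded, all truncations $\widetilde{J}_n(x)$ have uniformly bounded entries, so Gershgorin's theorem bounds all their eigenvalues—hence all poles—uniformly; moreover $\widetilde{J}(x)$ is then $G$-self-adjoint and bounded with $\widetilde{J}(x)-xI$ $G$-non-negative by Theorem \ref{SA_th}, so it is definitizable, and Proposition \ref{HKM_th} applied to $A=L$, $B=GL^{\top}$ with $\sigma(LGL^{\top})=\sigma(J-xI)\subseteq[-1,1]-x$ places the whole spectrum in $[-1,1]$. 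Conversely, if the poles form a bounded set, then by the strong resolvent convergence of the truncations \cite{DD07},\cite{Si} every point of $\sigma(\widetilde{J}(x))$ is a limit of poles, so $\sigma(\widetilde{J}(x))$ is bounded and $\rho(\widetilde{J}(x))\ne\emptyset$; together with the $G$-non-negativity this makes $\widetilde{J}(x)$ definitizable, and Proposition \ref{SpecDef}(ii) then forces it to be bounded. The excluded alternative $\rho(\widetilde{J}(x))=\emptyset$ would mean $\sigma(\widetilde{J}(x))=\dC$, which is the genuinely unbounded (Stahl) regime in which the poles are dense.

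Chaining the two steps yields \eqref{Bcond} $\iff \widetilde{J}(x)$ bounded $\iff$ the poles lie in a bounded subset of $\dR$. I expect the real difficulty to be the poles-to-operator bridge of the third paragraph: one must rule out spurious truncation eigenvalues running off while the limiting spectrum stays bounded, and it is exactly here that the definitizability apparatus—Proposition \ref{SpecDef}, the $AB$/$BA$ spectral identity of Proposition \ref{HKM_th}, and the strong resolvent convergence of truncations—does the work, with the dichotomy $\rho(\widetilde{J}(x))\ne\emptyset$ versus $\sigma(\widetilde{J}(x))=\dC$ as its crux. By contrast the algebraic core, the identity $a_j^2/d_j=b_{j+1}-x-d_{j+1}$, is what removes any need for a separate lower bound on $|d_j|$ and collapses the whole boundedness criterion to the single condition \eqref{Bcond}.
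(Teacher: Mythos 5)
First, a point of reference: the paper itself does not prove this proposition --- it is imported verbatim as Theorem 5.5 of \cite{DD09}, where the argument runs through the operator representation and convergence theory of definitizable functions. So your attempt must stand on its own. The forward implication, \eqref{Bcond} implies that the poles lie in a bounded subset of $\dR$, is correct and cleanly done: the identity $a_j^2/d_j=b_{j+1}-x-d_{j+1}$ from \eqref{LU_entries1} really does make a separate lower bound on $|d_j|$ unnecessary, so $\sup_j|d_j(x)|<\infty$ gives boundedness of $L$, hence of $\widetilde{J}(x)=GL^{\top}L+xI$, and the eigenvalues of the compressions of $\widetilde{J}(x)$ are bounded by $\|\widetilde{J}(x)\|$; reality of the zeros of $\cP_n$ via quasi-orthogonality with respect to the positive measure $(t-x)^2d\mu$ is also fine.

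The converse is where there is a genuine gap. You deduce that $\sigma(\widetilde{J}(x))$ is bounded from the boundedness of the poles by asserting that strong resolvent convergence of the truncations makes every spectral point a limit of truncation eigenvalues. Two objections. First, strong resolvent convergence to $\widetilde{J}(x)$ presupposes $\rho(\widetilde{J}(x))\ne\emptyset$, and in \cite{DD07}, \cite{Si} such convergence is established only under hypotheses (self-adjointness, or definitizability) that are exactly what you are trying to derive; as used, the step is circular. Second, even granting convergence, the spectral inclusion $\sigma(A)\subseteq\liminf\sigma(A_n)$ is a theorem about self-adjoint or normal operators and fails for general non-normal matrices, and a $G$-self-adjoint operator is very much non-normal. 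The danger is concrete: a non-symmetric matrix can have enormous entries yet tiny eigenvalues (e.g. $\left(\begin{smallmatrix} n & n\\ -n & -n\end{smallmatrix}\right)$ has both eigenvalues equal to $0$), so ``all truncation eigenvalues bounded'' does not by itself force the diagonal entries $d_j(1+v_j^2)+x$, hence the $d_j$, to be bounded; your trace-type heuristics only give $|d_n|=O(n)$. Note also that within this paper the implication (iii)$\Rightarrow$(i) of Theorem \ref{Berlin} derives boundedness of $L$ from boundedness of the poles precisely by invoking Proposition \ref{DDth}, so nothing here can be recycled to close your converse without circularity. A repair would have to reproduce the actual content of \cite{DD09}: tie the poles to the function $\sF$ and its operator model directly rather than to the limiting spectrum through an unproved spectral-inclusion principle.
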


From  \eqref{LU_entries1} and \eqref{LU_entries} one can easily see that the boundedness \eqref{Bcond} is equivalent to
the boundedness in $\ell^2$ of the operator $L$ defined by \eqref{DefC} (see also \cite[Theorem 6.3]{DD10} where this was shown in the context of monic orthogonal polynomials).  Finally, having in mind the Stahl tridiagonal matrices one can conclude that the boundedness of $L$
is the key to the definitizability of the shifted Darboux transform $\widetilde{J}(x)$ in the corresponding Krein space $\ell^2(G)$.
All these statements can be summarized into the following form.

\begin{theorem}\label{Berlin}
 Let $J$ be a Jacobi matrix corresponding to the measure $d\mu$, whose support is equal to or contained
in the interval $[-1,1]$, and let $x\in(-1,1)$. Then the following statements are equivalent:
\begin{enumerate}
\item[(i)] The shifted Darboux transformation $\widetilde{J}(x)$ is a definitizable operator in $\ell^2(G)$.
\item[(ii)] The relation 
\begin{equation}\label{SpecD}
\sigma(\widetilde{J}(x))\setminus\{x\}=\sigma(J)\setminus\{x\}
\end{equation} 
is valid.
\item[(ii)] The set of the poles of diagonal Pad\'e approximants to the definitizable function
\[
\int_{-1}^{1}\frac{1}{t-\lambda}(t-x)d\mu(t)
\]
does not have an accumulation point at infinity. 
\end{enumerate} 
\end{theorem}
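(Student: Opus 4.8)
The plan is to factor the Darboux transformation as a product and to read off all three equivalences from the spectral behaviour of the two orderings of the factors. Set $A=L$ and $B=GL^{\top}$, so that by \eqref{DarFac} and \eqref{Gnonn} one has $AB=J-xI$ and $BA=\widetilde{J}(x)-xI$. Two background facts are used throughout. First, as observed after the proof of Theorem \ref{SA_th}, the computation \eqref{NonDar} is valid for the shifted Darboux transformation of \emph{any} symmetric Jacobi matrix, so $\widetilde{J}(x)$ is always $G$-self-adjoint and $\widetilde{J}(x)-xI$ is always $G$-non-negative; hence $h(t)=t-x$ is automatically a definitizing polynomial and, $\widetilde{J}(x)$ being self-adjoint in $\ell^2(G)$, statement (i) reduces to the single requirement $\rho(\widetilde{J}(x))\neq\emptyset$. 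Second, as recorded in the text preceding the theorem, the bound \eqref{Bcond} is equivalent to the boundedness of $L$ in $\ell^2$, and by Proposition \ref{DDth} the latter is equivalent to (iii) (reading ``no accumulation point at infinity'' as boundedness of the pole set); moreover, since $G$ is unitary, $\|L^{\top}L\|=\|L\|^2$, so $L$ is bounded if and only if $\widetilde{J}(x)-xI=GL^{\top}L$, equivalently $\widetilde{J}(x)$, is bounded.

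Next I would establish (i)$\Leftrightarrow$(ii). For (i)$\Rightarrow$(ii), definitizability gives $\rho(BA)\neq\emptyset$, while $\rho(AB)\neq\emptyset$ holds automatically because $AB=J-xI$ is bounded; Proposition \ref{HKM_th} then yields $\sigma(BA)\setminus\{0\}=\sigma(AB)\setminus\{0\}$, which after shifting by $x$ is precisely \eqref{SpecD}. For (ii)$\Rightarrow$(i), recall that $\sigma(J)$ equals the support of $d\mu$ and hence lies in $[-1,1]$; then \eqref{SpecD} forces $\sigma(\widetilde{J}(x))\subseteq[-1,1]\cup\{x\}=[-1,1]$, since $x\in(-1,1)$. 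In particular $\sigma(\widetilde{J}(x))\neq\dC$, so $\rho(\widetilde{J}(x))\neq\emptyset$, and by the reduction of the first paragraph this is exactly (i).

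It remains to tie in (iii). For (iii)$\Rightarrow$(i), boundedness of $L$ makes $\widetilde{J}(x)$ bounded, so its spectrum is compact, $\rho(\widetilde{J}(x))\neq\emptyset$, and (i) follows. For (i)$\Rightarrow$(iii), I would combine (i) with the already-proven (ii): definitizability together with $\sigma(\widetilde{J}(x))\subseteq[-1,1]$ lets Proposition \ref{SpecDef}(ii) upgrade the bounded spectrum to boundedness of the operator $\widetilde{J}(x)$, whence $L$ is bounded and (iii) holds. I expect this last passage to be the crux. The Stahl phenomenon of Section 4 shows that a $G$-self-adjoint operator satisfying the non-negativity \eqref{NonDar} can nonetheless have $\sigma=\dC$, so confining the spectrum to $[-1,1]$ is by no means automatic, and recovering an operator bound from a bounded spectrum is a genuine structural feature of definitizable operators. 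The real work is thus concentrated in the interplay of Proposition \ref{HKM_th}, which pins the spectrum down once $\rho\neq\emptyset$ is known, with Proposition \ref{SpecDef}(ii), which converts that spectral information into boundedness of $\widetilde{J}(x)$; the remaining equivalences, in particular \eqref{Bcond}$\Leftrightarrow$``$L$ bounded'', are routine bookkeeping already available in the text.
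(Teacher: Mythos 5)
Your overall architecture coincides with the paper's: the same factorization $AB=J-xI$, $BA=\widetilde{J}(x)-xI$, Proposition \ref{HKM_th} to transfer the spectrum, the general form of \eqref{NonDar} to supply the definitizing polynomial $h(t)=t-x$, Proposition \ref{SpecDef}(ii) to pass from bounded spectrum to bounded operator, and Proposition \ref{DDth} together with the equivalence \eqref{Bcond} $\Leftrightarrow$ ``$L$ bounded'' to bring in the Pad\'e statement. The only structural difference is that you prove (i)$\Leftrightarrow$(ii) and (i)$\Leftrightarrow$(iii) separately where the paper runs the cycle (i)$\Rightarrow$(ii)$\Rightarrow$(iii)$\Rightarrow$(i); that is immaterial.

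There is, however, one claim in your first paragraph that is false and that you then lean on: that \eqref{NonDar} makes $\widetilde{J}(x)$ ``always $G$-self-adjoint,'' so that (i) reduces to $\rho(\widetilde{J}(x))\neq\emptyset$ alone. The computation \eqref{NonDar} yields only $G$-\emph{symmetry} and $G$-non-negativity on finitely supported vectors; self-adjointness is a separate issue, which is exactly why Theorem \ref{SA_th} had to invoke Carleman's criterion for the Stahl example, and why Remark \ref{SimonEx} exhibits a Darboux transform $\widetilde{J}=L^{\top}L$ (the case $G=I$) that is symmetric but \emph{not} self-adjoint. Your implications survive because in each one self-adjointness happens to be available from the hypothesis actually in force: in (i)$\Rightarrow$(ii) it is part of the definition of definitizability; in (iii)$\Rightarrow$(i) the operator is bounded and everywhere defined, hence $G$-self-adjoint; and in (ii)$\Rightarrow$(i) the inclusion $\sigma(\widetilde{J}(x))\subseteq[-1,1]$ puts a non-real $\lambda$ and its conjugate $\bar\lambda$ in the resolvent set, which forces a closed $G$-symmetric operator to equal its Krein-space adjoint --- this last step is the argument the paper compresses into ``if \eqref{SpecD} holds then $\widetilde{J}(x)$ is self-adjoint in $\ell^2(G)$,'' and it is what you should say in place of the blanket claim. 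With that substitution your proof is correct and matches the paper's.
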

\begin{proof}
The analysis of the operator $\widetilde{J}(x)$ basically goes the same lines as it was done for the operator $\widetilde{J}_0(\cos\pi\alpha)$.
So, let us sketch the proof.

(i)$\Rightarrow$(ii)  If $\widetilde{J}(x)$ is definitizable then $\rho(\widetilde{J}(x))\ne\emptyset$.
Consequently, applying Proposition \ref{HKM_th} and the definition of $\widetilde{J}(x)$ gives \eqref{SpecD}.
 
(ii)$\Rightarrow$(iii) If \eqref{SpecD} holds true then $\widetilde{J}(x)$ is self-adjoint in $\ell^2(G)$. Moreover, $\widetilde{J}(x)$ is definitizable because one can easily mimic \eqref{NonDar}
in the general case. Next, from Propostion \ref{SpecDef} one gets that $\widetilde{J}(x)$ is bounded. Thus, $L$ is also bounded which implies
\eqref{Bcond}. Therefore, the desired statement is a consequence of Proposition \ref{DDth}.

(iii)$\Rightarrow$(i) Proposition \ref{DDth} and \eqref{LU_entries1}, \eqref{LU_entries} give the boundedness of $L$.
So, it follows from general version of \eqref{NonDar} and the boundedness of $\widetilde{J}(x)$ that the operator $\widetilde{J}(x)$ is definitizable.
\end{proof}

\begin{remark}
Taking into account Remark \ref{SimonEx} and Theorem \ref{Berlin}, one can see that the definitizability plays 
the same role for signed measures as the self-adjointness does for positive measures. As we saw, using only the  self-adjointness in the case
of signed measures doesn't help a lot. At the same time, using definitizability allows to prove Markov-type results for 
the Cauchy transforms of signed measures with one sign change  \cite[Theorem 5.5]{DD09}. It would also be nice to extend this theory to the case
of multiple sign changes.
\end{remark}

\noindent{\bf Acknowledgments.}
I gratefully acknowledge the support of FWO Flanders project G.0934.13 and Belgian Interuniversity Attraction Pole P07/18.
Also, this research was partially supported by the European Research Council under the European Union  Seventh
Framework Programme (FP7/2007-2013)/ERC grant agreement \linebreak no. 259173 and was partially done in Berlin,
where Herbert Stahl and Peter Jonas were studying different faces of the same object  at about the same time but on the different
sides of the wall.

\end{document}